\documentclass{amsart}

\usepackage{amsmath}
\usepackage{amssymb}
\usepackage{amsthm}
\usepackage{bbm}
\usepackage{graphicx}
\usepackage{color}
\usepackage{epsfig}

\newtheorem{theorem}{Theorem}
\newtheorem{prop}[theorem]{Proposition}
\newtheorem{lemma}[theorem]{Lemma}
\newtheorem{cor}[theorem]{Corollary}

\newtheorem{question}{Question}

\newtheorem{conj}[question]{Conjecture}

\newcommand{\floor}[1]{\ensuremath{\left \lfloor {#1} \right \rfloor}}

\newcommand{\ang}[1]{\ensuremath{\left \langle {#1} \right \rangle}}

\newcommand{\Z}{{\mathbb Z}}

\newcommand{\bX}{{\bf X}}

\newcommand{\sym}{\textsc{sym}}
\newcommand{\dv}{\textsc{div}}
\newcommand{\alldiv}{\textsc{alldiv}}

\allowdisplaybreaks

\title{Symmetric and Asymptotically Symmetric Permutations}
\author{Joshua N. Cooper and Andrew Petrarca}
\date{\today}

\begin{document}

\begin{abstract} We consider two related problems arising from a question of R.~Graham on quasirandom phenomena in permutation patterns.  A ``pattern'' in a permutation $\sigma$ is the order type of the restriction of $\sigma : [n] \rightarrow [n]$ to a subset $S \subset [n]$.  First, is it possible for the pattern counts in a permutation to be exactly equal to their expected values under a uniform distribution?  Attempts to address this question lead naturally to an interesting number theoretic problem: when does $k!$ divide $\binom{n}{k}$?  Second, if the tensor product of a permutation with large random permutations is random-like in its pattern counts, what must the pattern counts of the original permutation be?  A recursive formula is proved which uses a certain permutation ``contraction.''
\end{abstract}

\maketitle

\section{Introduction}

In a well-shuffled deck, the royal spades are equally likely to be in any particular order.  Consequently, each such ordering (for example, $\spadesuit$AKQJ or $\spadesuit$KJAQ) will occur with probability 1/24.  Likewise, any fixed subsequence of a (uniformly) random permutation is as likely to be in any order as any other: probability $1/k!$ for any such ``pattern'' of $k$ symbols.  If one considers all $\binom{n}{k}$ possible subsequences, then the number of patterns of each of the $k!$ order types will be, on average, $\binom{n}{k}/k!$.

Suppose that, for some permutation $\sigma$, it is indeed true that each of the patterns on $k$ symbols occurs approximately $\binom{n}{k}/k!$ times.  It is not hard to see that the same statement with $k$ replaced by $k-1$ is also true of $\sigma$, but is it true if $k$ is replaced with $k+1$?  Although it might seem highly unlikely, precisely this phenomenon occurs with graphs, as discovered and popularized by Chung, Graham, and Wilson in their seminal paper, ``Quasirandom Graphs'' (\cite{ChGrWi89}).  They showed that, if the number of $k$-vertex {\it subgraphs} of a graph $G$ occur at about the ``right rate'' -- i.e., the expected number of times they would occur were $G$ chosen uniformly at random from all $n$-vertex graphs -- then this same statement must also hold for the $(k+1)$-vertex subgraphs {\it iff $k \geq 4$}.

Graham asked (\cite{Co04,Gr01}), does this upward-implication also occur for permutations?  (The exact definitions are a bit technical, and therefore reserved for the next section.)  That it cannot hold for $k=1$ is obvious.  The problem of $k=2$ requires a moment's thought to see that it is also not possible.  In fact, $k=3$ was also resolved in the negative early on by Chung (\cite{Ch01}).  We conjecture that this pattern continues for all $k$.  Unfortunately, even $k=4$ is still open.  In the present manuscript, we offer a construction, which we call ``inflatable'' permutations, that may provide some inroads into the problem.  Already, it has provided millions of proofs in the case $k=3$.  We discuss this construction and some of its consequences in Section \ref{sec:asymptotic}.  In particular, we offer a formula for computing the pattern statistics of permutations whose ``inflation'' by a random permutation has random-like $k$-pattern counts.

A related question to the existence of inflatable permutations, and one which might also shed light on Graham's conjecture, in that of {\it exact} achievement of the expected value.  Is it possible for the number of length $k$ patterns of each type to be exactly $\binom{n}{k}/k!$ ?  The case $k=1$ is trivial, while $k=2$ again requires a bit of thought, and $k=3$ is far from obvious.  We show that there do exist permutations satisfying this condition when $k=3$.  Again, $k=4$ remains open.  And, again, this problem very closely parallels one in graph theory: can all the $k$-vertex subgraph counts be {\it exactly} equal?  Janson and Spencer consider this problem in \cite{JaSp92} and, intriguingly, are also able to resolve the question (in the positive) only for $k=1$, $2$, and $3$.

Certainly, a necessary condition is that the quantity $\binom{n}{k}/k!$ be an integer.  Hence, we consider the problem of identifying the integers for which this can happen, since these $n$ are the natural targets for a computer search.  We show that the divisibility condition depends only on the congruence class of $n$ modulo some integer $T$, and we determine the smallest such $T$.  In the case $k=4$, it turns out that the least $n$ where exact achievement of the expected values {\it might} happen (other than the trivial case when $n < k$) is $n = 64$.  Despite thousands of processor-hours, finding the desired permutation in the haystack of $64! \approx 10^{89}$ possibilities has remained elusive.  We discuss this question in Section \ref{sec:exact}.

\section{Preliminaries}

Write $\mathfrak{S}_n$ for the symmetric group of order $n$.  Let $\bX^\tau(\sigma)$, for two permutations $\tau \in \mathfrak{S}_k$ and $\sigma \in \mathfrak{S}_n$, denote the number of copies of $\tau$ that appear in $\sigma$ as a pattern.  That is, $\bX^{\tau}(\sigma)$ is the number of elements $S$ of $\binom{[n]}{k}$ so that $\tau(i) < \tau(j)$ iff $\sigma(i) < \sigma(j)$ for $i,j \in S$.  We call a permutation $\sigma$ ``$k$-symmetric'' and write $\sym_k(\sigma)$ if, for each $\tau, \tau^\prime \in \mathfrak{S}_k$,
$$
\bX^{\tau}(\sigma) = \bX^{\tau^\prime}(\sigma).
$$
Clearly, it is equivalent to require that $\bX^{\tau}(\sigma) = \binom{n}{k}/k!$ for each $\tau \in \mathfrak{S}_k$.  We will say that a $k$-symmetric permutation $\sigma$ is {\it trivially} $k$-symmetric if $\binom{n}{k} = 0$, and {\it nontrivially} otherwise.  Hence, it is necessary that $k! | \binom{n}{k}$, under which circumstances we will write $\dv_k(n)$.  Furthermore, since it is easy to see that $\sym_k(\sigma) \Rightarrow \sym_{k-1}(\sigma)$ for $k \geq 2$ (see, e.g., \cite{Co04}), the condition
$$
\alldiv_k(n) = \dv_1(n) \wedge \cdots \wedge \dv_k(n)
$$
is also necessary in order for there to exist a permutation $\sigma \in \mathfrak{S}_n$ which is $k$-symmetric.  We have the following conjecture.

\begin{conj} \label{conj:modularcondition} If $\alldiv_k(n)$, then there exists a $k$-symmetric permutation on $n$ symbols.
\end{conj}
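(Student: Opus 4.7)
The plan is to proceed by induction on $k$, exploiting the downward implication $\sym_k(\sigma) \Rightarrow \sym_{k-1}(\sigma)$ to reduce the search space at each step. The cases $k \in \{1,2,3\}$ are handled directly (the first two elementarily, the third in Section \ref{sec:exact}); for the inductive step, I would assume that whenever $\alldiv_{k-1}(m)$ holds, $\mathfrak{S}_m$ contains a $(k-1)$-symmetric permutation. Fix $n$ with $\alldiv_k(n)$, and since $\alldiv_k(n) \Rightarrow \alldiv_{k-1}(n)$, a $(k-1)$-symmetric $\sigma_0 \in \mathfrak{S}_n$ is available as a starting point. The task then reduces to modifying $\sigma_0$ into a $k$-symmetric permutation while preserving $(k-1)$-symmetry.

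Concretely, set $\bP_k(\sigma) = (\bX^\tau(\sigma))_{\tau \in \mathfrak{S}_k} \in \Z^{k!}$. I would first show that the $(k-1)$-symmetric constraint confines $\bP_k$ to an affine subspace of $\Q^{k!}$ of codimension $(k-1)! - 1$, and that the target $(\binom{n}{k}/k!) \cdot \mathbf{1}$ lies on this subspace precisely when $\alldiv_k(n)$. Next I would exhibit a family of local modifications on $\mathfrak{S}_n$ --- for instance, rearrangements within a $2k$-block, or the contraction/inflation procedure of Section \ref{sec:asymptotic} applied to inflatable permutations --- which preserve $(k-1)$-symmetry and whose joint effect on $\bP_k$ spans the translation lattice of this affine subspace. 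A spanning family would convert the problem into a connectivity statement on a graph of $(k-1)$-symmetric permutations and immediately yield a $k$-symmetric permutation by walking from $\sigma_0$ to the uniform vertex.

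The hard part will be ruling out a hidden congruence obstruction: even if the available moves span the relevant affine subspace over $\Q$, the $\Z$-lattice they generate might be a proper sublattice that misses the uniform target despite the divisibility hypothesis. To circumvent this I would inject randomness, inflating an inflatable permutation by a uniformly random $\tau \in \mathfrak{S}_m$ and using the recursive pattern-count formula of Section \ref{sec:asymptotic} together with a $k!$-dimensional local central limit theorem to argue that $\bP_k(\sigma \otimes \tau)$ achieves the uniform value with positive probability for infinitely many $m$ of the correct residue. Controlling this at the level of exact integer equality, rather than concentration within $o(n^k)$, is genuinely delicate and is presumably the reason that even $k=4$ has resisted attack; I would expect any full proof to require a new identity relating the vector $\bP_k(\sigma)$ to some symmetric-function statistic of the one-line notation of $\sigma$ strong enough to rule out parasitic lattice obstructions.
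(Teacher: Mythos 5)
This statement is labeled a conjecture in the paper, and the paper offers no proof of it: even the special case $k=4$ is explicitly stated to be open, and the authors report thousands of processor-hours of unsuccessful search at the first candidate value $n=64$. Your submission is a research program rather than a proof, and you acknowledge as much at every key juncture (``I would first show,'' ``I would exhibit,'' ``the hard part will be''). The two load-bearing steps are both missing. First, you never exhibit the family of $(k-1)$-symmetry-preserving local modifications whose effects span the translation lattice of the relevant affine subspace; producing such a family, and proving that the $\Z$-lattice it generates (not merely its $\Q$-span) contains the displacement to the uniform target, \emph{is} the conjecture in only slightly disguised form. Second, the fallback via inflation plus a local central limit theorem cannot work as stated: Lemma \ref{lemmapartitionformula} and the surrounding results control pattern counts of $\sigma \otimes \rho_n$ only up to $o(n^k)$ error, and upgrading concentration to exact attainment of a single integer point requires precisely the lattice information whose absence you are trying to route around. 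The argument is circular at that point.

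Your base case is also not established. Section \ref{sec:exact} determines the modulus $T(3)=36$ and reports that exactly two nontrivially $3$-symmetric permutations exist at $n=9$; it does not prove that a $3$-symmetric permutation exists for \emph{every} $n$ satisfying $\alldiv_3(n)$, which is what the conjecture asserts and what your induction would need as its floor. Similarly, your claim that the target $(\binom{n}{k}/k!)\cdot\mathbf{1}$ lies on the affine subspace cut out by $(k-1)$-symmetry ``precisely when $\alldiv_k(n)$'' needs the converse direction checked against $\alldiv_{k}$ rather than $\dv_k$ alone, and more importantly it only shows the target is \emph{consistent} with the linear constraints, not that it is realized by any actual permutation. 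In short: no step of the proposal is carried out, the steps that are sketched either presuppose the result or rely on asymptotic tools too weak for an exact counting statement, and the paper itself treats the statement as open.
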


We are also interested in the least $n \geq k$ so that $\alldiv_k(n)$, since this at least gives one a starting point in a computer search for nontrivially $k$-symmetric permutations.  Call this quantity $f(k)$.

\section{Divisibility Conditions for $k$-Symmetry} \label{sec:exact}

The condition $\dv_k(n)$ is equivalent to
$$
k!^2 | n^{\underline{k}} = n(n-1)(n-2)\cdots(n-k+1).
$$
Hence, if we factor $k! = \prod_{p} p^{e_p(k)}$, we need to ensure that, for each prime $p$,
$$
\nu_p(n(n-1)(n-2)\cdots(n-k+1)) \geq 2e_p(k),
$$
where $\nu_p(m)$ denotes the the greatest $r$ so that $p^r | m$, a statement which is sometimes written $p^r \| m$.  (We take $\nu_p(0) = \infty$ for any $p$.)  Products of consecutive integers have a long history in the literature and were of particular interest to Erd\H os, who considered their prime factorizations and famously showed (with Selfridge) that they could never be perfect powers.  (See, for example, \cite{Er39,Er85,Gy99,Le65}.  The manuscripts \cite{ErSt77} and \cite{KoRe02} come closest to the present questions.)

We begin with the following simple lemma.

\begin{lemma} \label{minproperty} For any $s,t$, if $\nu_p(s) < \nu_p(t)$, then $\nu_p(t + s) = \nu_p(s)$.
\end{lemma}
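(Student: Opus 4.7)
The plan is to prove this by direct manipulation of $p$-adic valuations, factoring out the minimum power of $p$ from $s+t$ and verifying that the remaining cofactor is coprime to $p$.

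First I would handle the edge cases. The hypothesis $\nu_p(s) < \nu_p(t)$ combined with the convention $\nu_p(0) = \infty$ forces $s \neq 0$, so $\nu_p(s) = a$ is a finite nonnegative integer. If $t = 0$, then $t+s = s$ and the conclusion is immediate. Otherwise $\nu_p(t) = b$ is also finite, with $a < b$.

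Next I would write $s = p^a u$ with $p \nmid u$, and $t = p^b v$ with $p \nmid v$. Then
\[
t + s = p^a u + p^b v = p^a\bigl(u + p^{b-a} v\bigr).
\]
Since $b - a \geq 1$, the quantity $p^{b-a} v$ is divisible by $p$, so $u + p^{b-a} v \equiv u \pmod{p}$, and since $p \nmid u$ we conclude $p \nmid (u + p^{b-a}v)$. Therefore $\nu_p(t + s) = a = \nu_p(s)$, as required.

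The main ``obstacle'' is essentially notational, namely keeping the edge case $t = 0$ (where $\nu_p(t) = \infty$) consistent with the extraction $t = p^b v$. Apart from that, the argument is just the standard ultrametric-type observation that when two $p$-adic valuations differ, the valuation of the sum equals the smaller one.
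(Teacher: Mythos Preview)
Your proof is correct and follows essentially the same approach as the paper's: both arguments factor out the minimum power of $p$ and check the cofactor. The only cosmetic difference is that the paper writes $t$ and $t+s$ in terms of their valuations and deduces $\nu_p(s)$, whereas you write $s$ and $t$ and deduce $\nu_p(t+s)$; your direction is slightly cleaner and avoids having to separately note that $t+s=0$ is impossible under the hypothesis.
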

\begin{proof} If $t = 0$ (or $t + s = 0$, in which case the statement is vacuous), this is clear.  Suppose $t = kp^a$ and $t+s = lp^b$ with $b = \nu_p(t+s)$, $a$ and $b$ finite.  Then
$$
s = lp^b - kp^a = p^{\min\{a,b\}} (l p^{b - \min\{a,b\}} - k p^{a - \min\{a,b\}}),
$$
whence $\nu_p(s) \geq \min\{a,b\}$.  Since $\nu_p(s) < a$, this implies $\nu_p(s) = b$.
\end{proof}

Since $p^{\nu_p(\cdot)}$ is completely multiplicative,
\begin{equation} \label{eq1}
\nu_p(n(n-1)(n-2)\cdots(n-k+1)) = \sum_{j=0}^{k-1} \nu_p(n-j).
\end{equation}
Define $f_{p,k}(n)$ to be the right-hand side of this expression.  If $n - j$ is a multiple of $p^{2e_p(k)}$ for any $0 \leq j \leq k-1$, then $f_{p,k} \geq 2e_p(k)$.  If not, then, by Lemma \ref{minproperty}, $f_{p,k}(n) \geq f_{p,k}(n_0)$, where $n_0$ is the least nonnegative representative of $n$ modulo $p^{2e_p(k)}$.  Either way, the fact that $f_{p,k}(n) \geq 2e_p(k)$ depends only on the congruence class to which $n$ belongs modulo $p^{2e_p(k)}$.  Furthermore, one need only ask for a divisor of the modulus $p^{2e_p(k)}$: let us denote by $a_p(k)$ the smallest $m$ so that $f_{p,k}(n) \geq 2e_p(k)$ depends only on the congruence class of $n$ modulo $p^m$.  (We set $a_p(k) = 0$ if this modulus does not exist, i.e., $p > k$.)  Write $T(k)$ for the minimum integer $m^\prime$ so that $\alldiv_k(n)$ depends only on the congruence class of $n$ modulo $m^\prime$.  Clearly,
$$
T(k) = \prod_{p} p^{a_p(k)},
$$
providing us with a necessary condition for the existence of $k$-symmetric permutations.  For example, when $k = 3$, we have $T(k) = 36$, and the relevant congruence classes are $0$, $1$, $9$, $20$, $28$, and $29$.  Hence, the first $n$ so that a nontrivially $3$-symmetric permutation may exist is $n=9$.  In fact, there are exactly two such permutations -- $349852167$ and $761258943$ -- which happen to be each other's inverse, reverse, and conjugate.

\begin{lemma} \label{notsmallerthank} For any $k$ and prime $p \leq k$, $p^{a_p(k)} \geq k$.
\end{lemma}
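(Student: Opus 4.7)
The plan is to argue by contradiction. Suppose for contradiction that $p^{a_p(k)} < k$, and set $m = a_p(k)$, so $p^m < k$ and hence $q := \lfloor k/p^m \rfloor \geq 1$. It then suffices to produce two integers $n_1, n_2$ with $n_1 \equiv n_2 \pmod{p^m}$ but $f_{p,k}(n_2) < 2e_p(k) \leq f_{p,k}(n_1)$; by definition of $a_p(k)$, this is a contradiction.

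For the ``small'' value I take $n_2 = k$, giving $n_2^{\underline{k}} = k!$ and hence $f_{p,k}(n_2) = \nu_p(k!) = e_p(k)$. Because $p \leq k$ we have $e_p(k) \geq 1$, so this is strictly less than $2 e_p(k)$.

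For the ``large'' value I set $n_1 = k + p^m t$ with a positive integer $t$ to be selected, and expand
$$
n_1^{\underline{k}} = \prod_{i=1}^{k} (p^m t + i).
$$
I then split the product according to whether $p^m \mid i$. For $i$ with $p^m \nmid i$, $\nu_p(i) < m \leq \nu_p(p^m t)$, so Lemma~\ref{minproperty} gives $\nu_p(p^m t + i) = \nu_p(i)$, contributing exactly the $p^m\nmid i$ portion of $\nu_p(k!)$. For $i = p^m s$ with $1 \leq s \leq q$, I factor out $p^m$ to get $\nu_p(p^m t + i) = m + \nu_p(t+s)$. Subtracting the $i=p^m s$ contribution $m + \nu_p(s)$ from $\nu_p(k!) = e_p(k)$ and adding back $m + \nu_p(t+s)$, the $m$'s cancel and the identity
$$
f_{p,k}(n_1) \;=\; e_p(k) - e_p(q) + \nu_p\!\left(\textstyle\prod_{s=1}^{q}(t+s)\right)
$$
drops out. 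Choosing $t = p^{e_p(k) + e_p(q)} - 1$ forces $\nu_p(t+1) \geq e_p(k) + e_p(q)$, which makes the product term at least $e_p(k)+e_p(q)$ and therefore $f_{p,k}(n_1) \geq 2 e_p(k)$. Since $n_1 - n_2 = p^m t$, the two values are congruent modulo $p^m$, completing the contradiction.

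The only nontrivial step is the bookkeeping leading to the displayed identity, and Lemma~\ref{minproperty} is precisely what is needed to carry it out. Conceptually, the obstacle is simply recognizing that when $p^m < k$ the window $\{n, n-1, \ldots, n-k+1\}$ always contains at least one multiple of $p^m$ whose further $p$-adic valuation is a free parameter within the residue class, so $f_{p,k}$ can be pushed past any fixed threshold without altering $n \pmod{p^m}$; the comparison point $n=k$ supplies the ``small'' side of the threshold for free.
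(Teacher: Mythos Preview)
Your proof is correct. Both you and the paper argue by contradiction, and both use $n_2 = k$ as the ``small'' witness via $f_{p,k}(k) = e_p(k) < 2e_p(k)$. The difference lies entirely in the ``large'' witness. The paper simply takes $n_1$ to be the least nonnegative residue of $k$ modulo $p^m$: since $0 \leq n_1 < p^m < k$, the index $j = n_1$ appears in the sum defining $f_{p,k}(n_1)$ and contributes $\nu_p(n_1 - n_1) = \nu_p(0) = \infty$, so $f_{p,k}(n_1) = \infty$ immediately. You instead build $n_1 = k + p^m t$ with $t = p^{e_p(k)+e_p(q)}-1$ so that one factor of $n_1^{\underline{k}}$ carries enough powers of $p$, and you track the identity $f_{p,k}(n_1) = e_p(k) - e_p(q) + \nu_p\!\left(\prod_{s=1}^{q}(t+s)\right)$ explicitly. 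Your route avoids leaning on the convention $\nu_p(0) = \infty$ and makes concrete the intuition you articulate at the end (that the window always contains a multiple of $p^m$ whose extra $p$-adic valuation is a free parameter within the residue class), at the cost of a few lines of bookkeeping; the paper's route is essentially a one-line observation once that convention is in hand.
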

\begin{proof} Suppose $p^{a_p(k)} < k$.  Then, for any $n$, if $n_0$ is the least nonnegative representative of $n$ modulo $p^{a_p(k)}$, $f_{p,k}(n) \geq 2e_p(k)$ iff $f_{p,k}(n_0) \geq 2e_p(k)$.  Hence,
$$
f_{p,k}(n_0) = \sum_{j=0}^{k-1} \nu_p(n_0-j) = \infty,
$$
since $j = n_0 < k$ satisfies $\nu_p(n_0 - j) = \infty$.  So, $f_{p,k}(n)$ holds for all $n$.  But
$$
f_{p,k}(k) = \sum_{j=1}^k \nu_p(j) = e_p(k) < 2e_p(k),
$$
unless $e_p(k) = 0$, i.e., $k < p$, a contradiction.
\end{proof}

\begin{lemma} If $n-p^{a_p(k)} \in \{0,\ldots,k-1\}$, then $f_{p,k}(n) = 2e_p(k)$.
\end{lemma}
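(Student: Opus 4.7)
The plan is to evaluate $f_{p,k}(n)$ directly, using the two preceding lemmas to control the $p$-adic valuations of the individual factors. Write $a = a_p(k)$, and by the hypothesis fix $j \in \{0,\ldots,k-1\}$ with $n = p^a + j$. Split the sum
\[
f_{p,k}(n) \;=\; \sum_{i=0}^{k-1} \nu_p(n-i)
\]
into the distinguished term at $i = j$ (where $n - j = p^a$ contributes $\nu_p(p^a) = a$) and the remaining $k-1$ terms with $i \ne j$.

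For each $i \ne j$, write $n - i = p^a + (j - i)$. By Lemma~\ref{notsmallerthank}, $p^a \geq k$, so $0 < |j - i| \leq k - 1 < p^a$, which forces $\nu_p(j - i) < a = \nu_p(p^a)$. Then Lemma~\ref{minproperty} (with $s = j - i$ and $t = p^a$) yields $\nu_p(n - i) = \nu_p(j - i)$. As $i$ ranges over $\{0,\ldots,k-1\}\setminus\{j\}$, the absolute differences $|j - i|$ enumerate $\{1,\ldots,j\}\cup\{1,\ldots,k-1-j\}$ (each value with multiplicity one), so
\[
f_{p,k}(n) \;=\; a \;+\; \nu_p(j!) + \nu_p((k-1-j)!).
\]

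The main obstacle is then to identify this expression with $2 e_p(k) = 2\nu_p(k!)$. Since $a = a_p(k)$ is defined as the least $m$ making the condition $f_{p,k}(n) \geq 2 e_p(k)$ depend only on $n \bmod p^m$, the minimality should pin down $a$ precisely as $2\nu_p(k!) - \nu_p(j!(k-1-j)!)$; equivalently, using $\nu_p(j!(k-1-j)!) = \nu_p((k-1)!) - \nu_p\binom{k-1}{j}$, one wants $a = \nu_p(k!) + \nu_p(k) + \nu_p\binom{k-1}{j}$. I would close by invoking Legendre's formula to compare this with the base-$p$ expansion of $k$ and with the carry count given by Kummer's theorem. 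The subtle point I would verify first on small cases ($k = 3$ and $k = 4$) is the $j$-independence of the right-hand side that the conclusion implicitly requires; any residual $j$-dependence must be absorbed into how $a_p(k)$ is forced by the minimality, and this reconciliation is where I expect the bulk of the careful arithmetic to lie.
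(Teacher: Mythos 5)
Your computation is correct as far as it goes, and it is essentially the calculation the paper performs immediately \emph{after} this lemma: writing $n=p^{a}+j$ with $a=a_p(k)$, Lemmas \ref{notsmallerthank} and \ref{minproperty} give $\nu_p(n-i)=\nu_p(j-i)$ for $i\neq j$, hence $f_{p,k}(n)=a+e_p(j)+e_p(k-1-j)$. But the ``reconciliation'' you defer to the end is not a technical loose end to be absorbed somewhere --- it is fatal to the statement as written. The quantity $e_p(j)+e_p(k-1-j)$ genuinely depends on $j$, so $f_{p,k}(p^{a}+j)$ cannot equal the constant $2e_p(k)$ for every $j\in\{0,\dots,k-1\}$. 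Concretely, take $p=2$ and $k=3$, so $a_2(3)=2$ and $2e_2(3)=2$: then $f_{2,3}(4)=\nu_2(24)=3$ and $f_{2,3}(6)=\nu_2(120)=3$, while only $f_{2,3}(5)=\nu_2(60)=2$. What is true, and what the subsequent derivation of $a_p(k)=e_p(k)+\lfloor\log_p k\rfloor$ actually uses, is that $f_{p,k}(p^{a}+j)\ge 2e_p(k)$ for all $j$, with equality exactly for those $j$ minimizing $e_p(j)+e_p(k-1-j)$ (equivalently, maximizing $\nu_p\binom{k-1}{j}$). Your formula reduces this to the identity $a_p(k)=2e_p(k)-\min_{0\le j\le k-1}\bigl(e_p(j)+e_p(k-1-j)\bigr)$, which does follow from the minimality in the definition of $a_p(k)$ but must be argued separately.

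The paper's own proof is entirely different in structure: it never evaluates $f_{p,k}(n)$, instead arguing by contradiction that $f_{p,k}(n)<2e_p(k)$ would allow replacing $n$ by $n'=p^{2e_p(k)}+j$ in the same class modulo $p^{a_p(k)}$ with $f_{p,k}(n')\ge 2e_p(k)$, and that $f_{p,k}(n)\ge 2e_p(k)+1$ would force the condition $f_{p,k}\ge 2e_p(k)$ to depend only on the class of $n$ modulo $p^{a_p(k)-1}$, violating minimality. The second half of that argument rests on an incomplete case analysis (it does not handle all $n$ for which some $\nu_p(n-i)\ge a_p(k)$), and the counterexample above shows it cannot be repaired for arbitrary $j$. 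So your direct approach is the sounder one; to complete it, weaken the conclusion to the inequality $f_{p,k}(n)\ge 2e_p(k)$ together with equality for the extremal $j$, rather than attempting to prove the stated equality for every $j$.
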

\begin{proof} First, suppose that $f_{p,k}(n) < 2e_p(k)$ for some $n$ with $n-j = p^{a_p(k)}$, $0 \leq j \leq k-1$.  Then $n^\prime = p^{2e_p(k)} + j$ satisfies $\nu_p(n^\prime - j) = 2e_p(k)$, whence $f_{p,k}(n^\prime) \geq 2e_p(k)$.  However, $n^\prime \equiv n \pmod{p^{a_p(k)}}$, contradicting the definition of $a_p(k)$.

Now, suppose that $f_{p,k}(n) \geq 2e_p(k) + 1$ for some $n$ with $n-j = p^{a_p(k)}$  Let $n^\prime = n + sp^{a_p(k)-1}$ for some $s \in \Z$.  Let $i \neq j$ with $0 \leq i \leq k-1$.  Then, since $|i-j| = |(n-j)-(n-i)| < k$, Lemma \ref{notsmallerthank} implies $|i-j| < p^{a_p(k)}$, so that $\nu_p(i-j) < a_p(k)$.  By Lemma \ref{minproperty}, this in turn gives that
$$
\nu_p(n-i) = \nu_p(n-j - (i-j)) = \nu_p(i-j) < a_p(k).
$$
Therefore, we have
\begin{align*}
\nu_p(n^\prime - i) &= \nu_p(p^{a_p(k)} + sp^{a_p(k)-1} + (j - i)) \\
& \geq \min\{a_p(k)-1,\nu_p(j-i)\} \\
& \geq \nu_p(j-i) = \nu_p(n-i).
\end{align*}
This yields
\begin{align*}
f_{p,k}(n^\prime) &= \sum_{i=0}^{k-1} \nu_p(n^\prime - i )\\
&\geq \nu_p(n^\prime - j) + \sum_{i \neq j} \nu_p(n - i) \\
&= \nu_p(p^{a_p(k)} + sp^{a_p(k)-1}) + \sum_{i \neq j} \nu_p(n - i) \\
&\geq a_p(k) - 1 + \sum_{i \neq j} \nu_p(n - i) = f_{p,k}(n) - 1 = 2e_p(k).
\end{align*}
On the other hand, suppose $\nu_p(n-j) < a_p(k)$ for all $0 \leq j \leq k-1$.  If there exists a $j$ so that $\nu_p(n-j) = a_p(k)-1$, i.e., $n-j = sp^{a_p(k)-1}$ for some $s$, then $n^\prime = n + p^{a_p(k)-1} (p-s)$, so $f_{p,k}(n) \geq 2e_p(k)$ by the above argument.  Hence, we can assume that $\nu_p(n-j) \leq a_p(k) - 2$ for all $0 \leq j \leq k-1$.  Setting $n^\prime = n + sp^{a_p(k)-1}$ gives
\begin{align*}
f_{p,k}(n^\prime) &= \sum_{i=0}^{k-1} \nu_p(n^\prime - i )\\
&= \sum_{i=0}^{k-1} \nu_p(n - i + sp^{a_p(k)-1})\\
&= \sum_{i=0}^{k-1} \nu_p(n - i)\\
&= f_{p,k}(n^\prime) < 2e_p(k).
\end{align*}
Therefore, whether $f_{p,k}(n) \geq 2e_p(k)$ or not depends only on the congruence class of $n$ modulo $p^{a_p(k)-1}$, contradicting the definition of $a_p(k)$.

\end{proof}

Suppose that $\nu_p(n-j) = a_p(k)$.  Then
\begin{align*}
a_p(k) &= \max_{0 \leq j \leq k-1} \left ( 2e_p(k) - \sum_{\substack{0 \leq i \leq k-1 \\ i \neq j}} \nu_p(n-i) \right ) \\
&= 2e_p(k) - \min_{0 \leq j \leq k-1} \sum_{\substack{0 \leq i \leq k-1 \\ i \neq j}} \nu_p(n-i)\\
&= 2e_p(k) - \min_{0 \leq j \leq k-1} \left ( \sum_{-k+1+j \leq s \leq -1} \nu_p(n-j+s) + \sum_{1 \leq s \leq j} \nu_p(n-j+s) \right )\\
\end{align*}
Since $|s| < k$ implies $|s| < p^{a_p(k)}$ implies $\nu_p(s) < a_p(k)$, we can apply Lemma \ref{minproperty} to this expression, obtaining
\begin{align*}
a_p(k) &= 2e_p(k) - \min_{0 \leq j \leq k-1} \left ( \sum_{s=1}^{k-1-j} \nu_p(s) + \sum_{s=1}^j \nu_p(s) \right ) \\
&= 2e_p(k) - \min_{0 \leq j \leq k-1} \left ( e_p(k-1-j) + e_p(j) \right ) \\
&= 2e_p(k) - e_p(k-1) + \max_{0 \leq j \leq k-1} \nu_p\left ( \binom{k-1}{j} \right ) \\
\end{align*}
We may now apply Kummer's Theorem on multinomial coefficients:

\begin{theorem}[Kummer's Theorem] $\nu_p(\binom{x}{y})$ is given by the number of integers $j > 0$ for which $\{y/p^j\} > \{x/p^j\}$.
\end{theorem}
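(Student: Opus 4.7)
The plan is to reduce Kummer's theorem to Legendre's formula for $\nu_p(n!)$ and then analyze the resulting sum term by term. First I would invoke Legendre's formula
\begin{equation*}
\nu_p(n!) = \sum_{j \geq 1} \lfloor n/p^j \rfloor,
\end{equation*}
which itself comes from a short counting argument: $\lfloor n/p^j \rfloor$ is the number of multiples of $p^j$ in $\{1, \ldots, n\}$, and summing over $j$ counts each $m \leq n$ with multiplicity $\nu_p(m)$. Applying this to the identity $\binom{x}{y} = x!/(y!(x-y)!)$ yields
\begin{equation*}
\nu_p\binom{x}{y} \;=\; \sum_{j \geq 1} \Bigl( \lfloor x/p^j \rfloor - \lfloor y/p^j \rfloor - \lfloor (x-y)/p^j \rfloor \Bigr).
\end{equation*}

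The heart of the proof is then to show that each summand is the indicator of the event $\{y/p^j\} > \{x/p^j\}$. Setting $a = x/p^j$ and $b = y/p^j$, I need $\lfloor a \rfloor - \lfloor b \rfloor - \lfloor a - b \rfloor$ to equal $1$ if $\{b\} > \{a\}$ and $0$ otherwise. This reduces to a two-case analysis: writing $a = \lfloor a \rfloor + \{a\}$ and $b = \lfloor b \rfloor + \{b\}$, we have $a - b = (\lfloor a \rfloor - \lfloor b \rfloor) + (\{a\} - \{b\})$. If $\{a\} \geq \{b\}$, then $\{a\} - \{b\} \in [0,1)$ is already the fractional part of $a-b$, giving $\lfloor a-b \rfloor = \lfloor a \rfloor - \lfloor b \rfloor$ and summand $0$. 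Otherwise $\{a\} - \{b\} \in (-1, 0)$, a borrow is needed, and $\lfloor a-b \rfloor = \lfloor a \rfloor - \lfloor b \rfloor - 1$, so the summand is $1$.

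Summing these indicators over $j \geq 1$ gives exactly the count claimed in the theorem, and the sum is finite since all three floors vanish once $p^j > x$. The only genuine work is the floor-function case analysis in the second step; this is standard and a few lines suffice. I expect no real obstacle, only the need to confirm that $\{b\} > \{a\}$ with $a = x/p^j$ and $b = y/p^j$ really is the inequality $\{y/p^j\} > \{x/p^j\}$ stated in the theorem, which is immediate from the definitions.
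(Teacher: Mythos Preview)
Your argument is correct and is the standard derivation of Kummer's theorem from Legendre's formula; the two-case floor analysis is exactly right, and the identification of the summand $\lfloor x/p^j\rfloor - \lfloor y/p^j\rfloor - \lfloor (x-y)/p^j\rfloor$ with the indicator of $\{y/p^j\} > \{x/p^j\}$ is clean.

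Note, however, that the paper does not actually supply a proof of this theorem: it is stated as a named classical result and then applied. So there is no ``paper's own proof'' to compare against. Your write-up is a self-contained proof of a statement the authors simply quote, and as such it goes beyond what the paper does rather than paralleling or diverging from it.
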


The inequality
$$
\{y/p^j\} > \{x/p^j\}
$$
holds whenever the integer obtained from the last $j$ $p$-ary digits of $y$ are greater than the corresponding integer for $x$.  By setting each digit of $y$ to $p-1$, while keeping $y < x$, the number of such $j$ is maximized.  That is, the $y$ which maximizes $\nu_p(\binom{x}{y})$ is
$$
y = \sum_{j=0}^{\floor{\log_p(x)}-1} (p-1) p^j,
$$
whence $\max_j \nu_p(\binom{k-1}{j})$ is the number of nonleading base-$p$ digits of $k-1$ to the left (inclusive) of the least significant $p$-ary digit which is {\it not} $p-1$.  This is the same as one less than the number of $p$-ary digits of $k-1$ minus the number of times that $p$ divides $(k-1)+1 = k$, or
$$
\floor{\log_p(k-1)} - \nu_p(k),
$$
unless $k$ is a power of $p$, in which case it is $0$.  We may then conclude the following.

\begin{theorem} For $p \leq k$,
$$
a_p(k) = e_p(k) + \floor{\log_p k} .
$$
\end{theorem}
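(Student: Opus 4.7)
The plan is to build directly on the computation preceding the statement, which has already reduced the problem to
$$a_p(k) = 2e_p(k) - e_p(k-1) + \max_{0 \leq j \leq k-1} \nu_p\binom{k-1}{j}.$$
So the proof splits into (i) evaluating $M_p(k) := \max_j \nu_p\binom{k-1}{j}$ exactly via Kummer's Theorem, and (ii) a short algebraic simplification.

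For step (i), I would set $L = \floor{\log_p(k-1)}$ and follow the informal hint in the text by testing $y^\ast = p^L - 1$. For $1 \leq j \leq L$ one has $\{y^\ast/p^j\} = (p^j-1)/p^j$, so $\{y^\ast/p^j\} > \{(k-1)/p^j\}$ iff the last $j$ base-$p$ digits of $k-1$ are not all $p-1$, which is equivalent to $p^j \nmid k$. For $j > L$ both $y^\ast$ and $k-1$ are less than $p^j$, so $\{y/p^j\} \leq \{(k-1)/p^j\}$ holds for \emph{every} $0 \leq y \leq k-1$; this is the key observation ruling out any contribution above position $L$, and it shows $y^\ast$ is optimal. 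Kummer's Theorem then gives
$$M_p(k) = \bigl|\{j : 1 \leq j \leq L,\ \nu_p(k) < j\}\bigr| = \max\bigl(L - \nu_p(k),\ 0\bigr).$$

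For step (ii), I would split cases to identify $M_p(k)$ with $\floor{\log_p k} - \nu_p(k)$: if $k = p^m$ then $L = m-1$ and $\nu_p(k) = m$, so both expressions equal $0$; otherwise $L = \floor{\log_p k}$ and $\nu_p(k) \leq L$ because $k < p^{L+1}$. Substituting $M_p(k) = \floor{\log_p k} - \nu_p(k)$ into the opening display and using the elementary identity $\nu_p(k) = e_p(k) - e_p(k-1)$ (which follows from $k! = k \cdot (k-1)!$), the whole expression collapses to $a_p(k) = e_p(k) + \floor{\log_p k}$. The main, and really only, obstacle is making the optimality of $y^\ast$ in step (i) rigorous; once one sees that $y \leq k-1 < p^{L+1}$ precludes any contribution at positions $j > L$, everything else is bookkeeping.
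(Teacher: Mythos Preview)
Your proposal is correct and follows essentially the same route as the paper: both start from the displayed identity $a_p(k) = 2e_p(k) - e_p(k-1) + \max_j \nu_p\binom{k-1}{j}$, evaluate the maximum via Kummer's Theorem using the candidate $y^\ast = p^L - 1$, and finish with the identity $\nu_p(k) = e_p(k) - e_p(k-1)$. Your treatment is, if anything, slightly tidier in that you verify $M_p(k) = \lfloor \log_p k \rfloor - \nu_p(k)$ uniformly (handling the power-of-$p$ case only to check this single equality), whereas the paper carries the case split $k = p^r$ versus $k \neq p^r$ through the final algebra.
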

\begin{proof} Applying the above equalities when $k$ is not an integer power of $p$,
\begin{align*}
a_p(k) &= 2e_p(k) - e_p(k-1) + \max_{0 \leq j \leq k-1} \nu_p\left ( \binom{k-1}{j} \right ) \\
&= e_p(k) + \left [ e_p(k) - e_p(k-1) \right ] + \left ( \floor{\log_p(k-1)} - \nu_p(k) \right ) \\
&= e_p(k) + \nu_p(k) + \floor{\log_p(k-1)} - \nu_p(k) \\
&= e_p(k) + \floor{\log_p(k-1)} = e_p(k) + \floor{\log_p(k)}.
\end{align*}
When $k = p^r$,
\begin{align*}
a_p(k) &= 2e_p(k) - e_p(k-1) + \max_{0 \leq j \leq k-1} \nu_p\left ( \binom{k-1}{j} \right ) \\
&= e_p(k) + \left [ e_p(k) - e_p(k-1) \right ] + 0 \\
&= e_p(k) + \nu_p(k) = e_p(k) + r = e_p(k) + \floor{\log_p(k-1)}.
\end{align*}
\end{proof}

\begin{cor} For any $k \geq 2$ and $p \leq k$,
$$
-2 < a_p(k) - \frac{k}{p-1} \leq \log_p k.
$$
\end{cor}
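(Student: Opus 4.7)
The plan is to reduce to Legendre's formula for $e_p(k)$ and read off both inequalities from simple digit-sum bounds. Writing $s_p(k)$ for the sum of the base-$p$ digits of $k$, Legendre's classical identity gives
$$
e_p(k) = \nu_p(k!) = \frac{k - s_p(k)}{p-1}.
$$
Substituting this into the theorem just proved, $a_p(k) = e_p(k) + \floor{\log_p k}$, and subtracting $k/(p-1)$, I would first rewrite
$$
a_p(k) - \frac{k}{p-1} = \floor{\log_p k} - \frac{s_p(k)}{p-1}.
$$
This is the master identity; from here both inequalities should drop out of the two elementary bounds $1 \leq s_p(k) \leq (p-1)(\floor{\log_p k} + 1)$ on the digit sum.

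For the upper bound, I would use $s_p(k) \geq 1$ (since $k \geq 2$) to get $-s_p(k)/(p-1) \leq 0$, so
$$
a_p(k) - \frac{k}{p-1} \leq \floor{\log_p k} \leq \log_p k,
$$
which is the right-hand inequality. For the lower bound, $k$ has exactly $\floor{\log_p k} + 1$ digits in base $p$, each at most $p-1$, so $s_p(k) \leq (p-1)(\floor{\log_p k} + 1)$, giving
$$
a_p(k) - \frac{k}{p-1} \geq \floor{\log_p k} - (\floor{\log_p k} + 1) = -1 > -2.
$$

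I do not expect any serious obstacle; the main content is just recognizing that $e_p(k)/(k/(p-1))$ being close to $1$ (Legendre's formula with a digit-sum error term) is exactly the right input, and that the error term $s_p(k)/(p-1)$ is controlled on both sides by the trivial digit-count bounds. In fact this approach gives the sharper lower bound $-1$; the stated $-2$ in the corollary seems to allow some slack, so one could either record the stronger bound or leave it as stated.
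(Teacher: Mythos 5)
Your proof is correct, and it reaches the stated inequalities (in fact a sharper lower bound). The paper takes the same overall strategy --- substitute the theorem $a_p(k) = e_p(k) + \floor{\log_p k}$ and estimate $e_p(k)$ via Legendre --- but instantiates Legendre's formula differently: it writes $e_p(t) = \sum_{j\ge 1} \floor{t/p^j}$ and sandwiches this by the geometric-series bounds $\frac{t(1-p^{-\floor{\log_p t}})}{p-1} - \floor{\log_p t} < e_p(t) \le \frac{t}{p-1}$, which after simplification gives $a_p(k) > \frac{k-p}{p-1} \ge \frac{k}{p-1} - 2$. You instead use the digit-sum form $e_p(k) = (k - s_p(k))/(p-1)$, which collapses everything to the identity $a_p(k) - \frac{k}{p-1} = \floor{\log_p k} - \frac{s_p(k)}{p-1}$; the two trivial digit bounds $1 \le s_p(k) \le (p-1)(\floor{\log_p k}+1)$ then give both directions immediately. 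Your route is cleaner and yields the sharper lower bound $-1$, which is actually attained (e.g.\ $p=2$, $k=3$: $a_2(3)=2$ and $k/(p-1)=3$), so your constant is best possible while the paper's $-2$ has slack. The only point worth noting explicitly is that $s_p(k)\ge 1$ holds because $k$ is a positive integer, and that the hypothesis $p\le k$ is what licenses the use of the theorem's formula for $a_p(k)$; both are satisfied here.
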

\begin{proof}
We employ the well-known formula
$$
e_p(t) = \sum_{j=1}^\infty \floor{\frac{t}{p^j}}.
$$
and the consequent estimate
$$
\frac{t (1-p^{-\floor{\log_p t}})}{p-1} - \floor{\log_p t} < e_p(t) \leq \frac{t (1-p^{-\floor{\log_p t}})}{p-1} \leq \frac{t}{p-1}.
$$
This implies that
$$
a_p(k) \leq \frac{k}{p-1} + \log_p k.
$$
In the other direction,
\begin{align*}
a_p(k) &> \frac{k (1-p^{-\floor{\log_p k}})}{p-1} - \floor{\log_p k}  + \floor{\log_p k} \\
& \geq \frac{k - kp^{-\log_p k + 1}}{p-1} \\
&= \frac{k - p}{p-1} \geq \frac{k}{p-1} - 2.
\end{align*}
\end{proof}

It remains to describe the size of $\log m^\prime(k) = \sum_{p} a_p(k) \log p$.
\begin{cor} $\log m^{\prime} = k \log k + O(k)$.
\end{cor}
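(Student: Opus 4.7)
The plan is to sum the bounds from the previous corollary over all primes $p \leq k$ and then apply classical estimates on prime sums. Since $\log m'(k) = \sum_{p \leq k} a_p(k) \log p$, writing $a_p(k) = \frac{k}{p-1} + \theta_p$ with $-2 < \theta_p \leq \log_p k$ gives
\[
\log m'(k) = k \sum_{p \leq k} \frac{\log p}{p-1} + \sum_{p \leq k} \theta_p \log p.
\]
So the task reduces to estimating the main sum and controlling the error.

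For the main term, I would use Mertens' second theorem, $\sum_{p \leq x} (\log p)/p = \log x + O(1)$. The difference $\sum_{p \leq x} \log p \bigl(\tfrac{1}{p-1} - \tfrac{1}{p}\bigr) = \sum_{p \leq x} \frac{\log p}{p(p-1)}$ is bounded by a convergent series, so
\[
\sum_{p \leq k} \frac{\log p}{p-1} = \log k + O(1).
\]
Multiplying by $k$ yields the leading $k \log k$ plus an $O(k)$ error.

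For the error term, the contribution from the lower bound $\theta_p > -2$ is at most $2 \sum_{p \leq k} \log p = 2\vartheta(k) = O(k)$ by Chebyshev's estimate (or the prime number theorem). The contribution from the upper bound $\theta_p \leq \log_p k = (\log k)/(\log p)$ is $\sum_{p \leq k} \log p \cdot (\log k)/(\log p) = \pi(k) \log k$, which is also $O(k)$ by Chebyshev. Hence
\[
\Bigl|\sum_{p \leq k} \theta_p \log p\Bigr| = O(k),
\]
and combining everything gives $\log m'(k) = k \log k + O(k)$.

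There is no serious obstacle here: the whole statement is essentially a direct consequence of the prior corollary combined with Mertens' and Chebyshev's classical bounds. The only minor care required is making sure both signs of $\theta_p$ are absorbed into $O(k)$ — the positive side uses $\pi(k) = O(k/\log k)$ and the negative side uses $\vartheta(k) = O(k)$, and neither is delicate.
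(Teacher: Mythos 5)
Your proposal is correct and follows essentially the same route as the paper: decompose $a_p(k)$ as $\frac{k}{p-1}$ plus an error of size $O(\log_p k + 1)$, evaluate the main term with Mertens' theorem, and absorb the error via $\pi(k)\log k + \vartheta(k) = O(k)$. The paper's proof is just a terser version of this same computation, so there is nothing further to reconcile.
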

\begin{proof}
\begin{align*}
\sum_{p} a_p(k) \log p &= \sum_{p \leq k} \frac{k \log p}{p-1} + O \left ( \sum_{p \leq k} \log p (\log_p(k) + 1) \right ) \\
&= k \sum_{p \leq k} \frac{\log p}{p-1} + O \left ( \sum_{p \leq k} (\log k + \log p) \right )\\
&= k \log k + O \left (k + \log k \cdot \frac{k}{\log k} \right ) \\
&= k \log k + O \left (k \right ).
\end{align*}
\end{proof}

\section{Asymptotic Symmetry and Inflatability} \label{sec:asymptotic}

In this section, we investigate the possibility of using a ``blow-up'' construction to address Graham's conjecture.  First, define a sequence of permutations $\sigma_i \in \mathfrak{S}_{n_i}$, $n_i \rightarrow \infty$, to be ``asymptotically $k$-symmetric'' if, for each $\tau \in \mathfrak{S}_k$,
$$
\lim_{i \rightarrow \infty} \frac{k!^2 \bX^{\tau}(\sigma_i)}{n^k} = 1.
$$
Customarily dropping indices, we may state asymptotic $k$-perfection as the condition that $\bX^{\tau}(\sigma) = \binom{n}{k} (1/k! + o(1))$ for each $\tau \in \mathfrak{S}_k$.  We write ${\sym}^\prime_k(\sigma)$ to mean that the sequence $\sigma_i$ is asymptotically $k$-symmetric.

\begin{question}[Graham] Does there exist a $k \geq 1$ so that
$$
{\sym}^\prime_k(\sigma) \Rightarrow {\sym}^\prime_{k+1}(\sigma)?
$$
\end{question}

We define a tensor product $\otimes$ of permutations, corresponding to the Kronecker product -- a.k.a. tensor/outer product -- of the corresponding permutation matrices.  (This product has appeared before in the literature, and appears in a surprising number of applications: see for example \cite{EgPuBe97}.)  Suppose $\pi_1 \in \mathfrak{S}_a$ and $\pi_2 \in \mathfrak{S}_b$.  If we consider elements of $\mathfrak{S}_n$ to be functions of $\{0,\ldots,n-1\}$, then $\pi_1 \otimes \pi_2 \in \mathfrak{S}_{ab}$ is defined by
$$
(\pi_1 \otimes \pi_2)(j) = b \pi_1 \left (\floor{\frac{j}{b}}\right) + \pi_2(j \!\!\!\! \pmod{b})
$$
for $0 \leq j \leq ab-1$.  Although this notation provides some simplicity, we will usually consider the elements of $\mathfrak{S}_n$ to be functions of $[n] = \{1,\ldots,n\}$ in order to agree with the nearly universal convention in literature on permutations.  

Let $\rho_n$ be a uniformly distributed random permutation on $n$ symbols.  We believe it very likely that permutations $\sigma$ exist so that $\sigma \otimes \rho_n$ is asymptotically $k$-symmetric but not asymptotically $(k+1)$-symmetric.  Evidently, the existence of such permutations would resolve Graham's conjecture in the negative.  We begin our description of these $\sigma$ with $k=2$.

\begin{prop} \label{prop2inflatable} For $\sigma \in \mathfrak{S}_m$, $m$ fixed, if the permutation $\sigma \otimes \rho_n$ is asymptotically $2$-symmetric, then $\sigma$ is $2$-symmetric.
\end{prop}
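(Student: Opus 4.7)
The strategy is to express $\bX^{12}(\sigma\otimes\rho_n)$ as an explicit sum over pairs of positions in $[mn]$ split according to whether the two positions lie in the same ``$\sigma$-block'' of size $n$ or in different blocks, then read off what the asymptotic symmetry assumption forces on $\sigma$.

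First I would set up the block decomposition. Thinking of $\sigma\otimes\rho_n$ as $m$ consecutive blocks of $n$ positions, where the $i$th block is sent bijectively into the $\sigma(i)$th block of the codomain via (a shift of) $\rho_n$, a pair of positions $i<j$ in $[mn]$ contributes to $\bX^{12}(\sigma\otimes\rho_n)$ in one of two ways. If $i,j$ lie in distinct $\sigma$-blocks, say the $b_1$st and $b_2$nd with $b_1<b_2$, then the relative order of their images is governed entirely by the relative order of $\sigma(b_1)$ and $\sigma(b_2)$; there are exactly $n^2$ position pairs for each such block pair. If $i,j$ lie in the same block, the order is governed by $\rho_n$. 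Counting gives
$$
\bX^{12}(\sigma\otimes\rho_n) \;=\; \bX^{12}(\sigma)\, n^2 \;+\; m\, \bX^{12}(\rho_n),
$$
and by symmetry the analogous identity for $\bX^{21}$.

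Next I would invoke the standard fact that a uniformly random $\rho_n$ has $\bX^{12}(\rho_n)=\binom{n}{2}(\tfrac{1}{2}+o(1))$ (in the sense relevant for the hypothesis --- in probability suffices if we interpret ``asymptotically $2$-symmetric'' in probability, and almost surely if we interpret it pathwise, by a Hoeffding/McDiarmid concentration argument on inversion counts). Combined with $\binom{mn}{2}=\binom{m}{2}n^2+m\binom{n}{2}$, the identity above yields
$$
\frac{\bX^{12}(\sigma\otimes\rho_n)}{\binom{mn}{2}} \;=\; \frac{\bX^{12}(\sigma)\, n^2 + \tfrac{m}{2}\binom{n}{2}(1+o(1))}{\tfrac{m^2 n^2}{2}(1+o(1))} \;=\; \frac{2\bX^{12}(\sigma)+m/2}{m^2} + o(1).
$$

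Finally, the hypothesis $\mathrm{sym}'_2(\sigma\otimes\rho_n)$ forces this ratio to tend to $1/2$, so $2\bX^{12}(\sigma)+m/2=m^2/2$, i.e., $\bX^{12}(\sigma)=m(m-1)/4=\binom{m}{2}/2$. Since $\bX^{12}(\sigma)+\bX^{21}(\sigma)=\binom{m}{2}$, both counts equal $\binom{m}{2}/2$ and $\sigma$ is $2$-symmetric. The only mildly delicate point is fixing the precise mode of convergence under which the hypothesis is being read; once that is settled, the block-decomposition identity does all the work and no real obstacle remains.
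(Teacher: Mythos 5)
Your proposal is correct and follows essentially the same route as the paper: decompose pairs of positions into same-block and different-block pairs, note the different-block pairs contribute exactly $\bX^{(12)}(\sigma)n^2$ while the same-block pairs contribute $m\binom{n}{2}(\tfrac12+o(1))$ with high probability, and let $n\to\infty$ to force $\bX^{(12)}(\sigma)=m(m-1)/4$. The arithmetic checks out and nothing essential is missing.
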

\begin{proof} Let $\pi_n = \sigma \otimes \rho_n$.   Suppose that ${\sym}^\prime_{2}(\pi_n)$, i.e.,
$$
\bX^{(12)}(\pi_n) = \bX^{(21)}(\pi_n) = (1+ o(1)) (mn)^2/4.
$$
Each of the pairs of indices in $[mn]$ either appear in different blocks of the form $B_k = [(k-1)n+1,kn]$, $k \in [m]$, or else in the same block.  The number of coinversions (and hence inversions, as well) of the latter type is, w.h.p.,
$$
\frac{m n^2}{4} (1+ o(1))
$$
while the number of coinversions of the former type is exactly
$$
\bX^{(12)}(\sigma) n^2.
$$
Therefore, w.h.p.,
$$
\frac{m^2 n^2}{4} (1+ o(1)) = \frac{m n^2}{4} (1+ o(1)) + \bX^{(12)}(\sigma) n^2.
$$
Dividing by $n^2/4$ yields
$$
4 \bX^{(12)}(\sigma) = (1+ o(1)) m(m-1).
$$
However, the left hand side does not depend on $n$, so we can take $n \rightarrow \infty$ in the above expression and get that $\bX^{(12)}(\sigma) = m(m-1)/4$, i.e., $\sigma$ is $2$-symmetric.
\end{proof}

\begin{prop} \label{prop3inflatable} Suppose $\pi_n = \sigma \otimes \rho_n$ is asymptotically $3$-symmetric for some fixed $\sigma \in \mathfrak{S}_m$.  Then $m \equiv 0,1, 9, \textrm{ or } 32 \pmod{36}$.
\end{prop}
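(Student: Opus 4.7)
The plan is to mirror the block-decomposition argument of Proposition~\ref{prop2inflatable}. Because asymptotic $3$-symmetry implies asymptotic $2$-symmetry (just as $\sym_k(\sigma) \Rightarrow \sym_{k-1}(\sigma)$), Proposition~\ref{prop2inflatable} forces $\sigma$ to be exactly $2$-symmetric, so $\bX^{12}(\sigma) = \bX^{21}(\sigma) = m(m-1)/4$; in particular $m \equiv 0$ or $1 \pmod 4$.

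Next I would partition the $\binom{mn}{3}$ position-triples of $[mn]$ by their distribution among the $m$ blocks $B_k$ of size $n$ in $\pi_n = \sigma \otimes \rho_n$: all three in a common block (type~1), two in one block and the third in another (type~2), or all three in distinct blocks (type~3). For each $\tau \in \mathfrak{S}_3$ the type~3 contribution to $\bX^\tau(\pi_n)$ is exactly $\bX^\tau(\sigma)\cdot n^3$, since the between-block value gap of $n$ exceeds within-block variation, so such a triple's pattern coincides with $\sigma$'s pattern on its three blocks. The type~1 contribution is $m\bX^\tau(\rho_n)$, which by the standard concentration for uniform random permutations equals $m\binom{n}{3}/6 + o(n^3)$ with high probability.

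The bulk of the argument is the type~2 case analysis. For an ordered pair of distinct blocks $(k_1,k_2)$ with the pair of positions in $k_1$ and the single in $k_2$, the resulting $3$-pattern is determined by three binary choices: whether $k_1<k_2$ or $k_1>k_2$; whether $\sigma(k_1)<\sigma(k_2)$ or the reverse; and whether the pair of positions within $\rho_n$ is a coinversion or an inversion. Enumerating the eight subcases assigns each $\tau$ a coefficient $c_\tau$ built from $\bX^{12}(\sigma)$ and $\bX^{21}(\sigma)$; after invoking $2$-symmetry of $\sigma$, $c_\tau$ equals $m(m-1)/4$ or $m(m-1)/2$, the doubled value arising precisely for the ``extreme'' patterns $\tau \in \{123,321\}$. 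Combined with the concentration $\bX^{12}(\rho_n) = \binom{n}{2}/2 + o(n^2)$, the type~2 contribution to $\bX^\tau(\pi_n)$ is $c_\tau\, n\binom{n}{2}/2 + o(n^3) = c_\tau n^3/4 + o(n^3)$ w.h.p.

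Matching the leading $n^3$ coefficient in $\bX^\tau(\pi_n) = (1+o(1))\binom{mn}{3}/6$ then yields, for each $\tau \in \mathfrak{S}_3$, the deterministic constraint
\[
\bX^\tau(\sigma) = \frac{m^3-m}{36} - \frac{c_\tau}{4}.
\]
The right-hand side takes only two distinct values as $\tau$ varies, and requiring each $\bX^\tau(\sigma)$ to be a non-negative integer translates, after clearing denominators, into divisibility conditions on $m(m-1)(m+1)$ and on $m(m-1)$. Analyzing these prime by prime at $p \in \{2,3\}$ and combining via the Chinese Remainder Theorem picks out the stated four residue classes of $m$ modulo $36$.

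The principal obstacle will be executing the type~2 case analysis without bookkeeping errors: one must carefully verify that the eight subcases distribute among the six $3$-patterns with the multiplicities required to make each $c_\tau$ come out as claimed. Once that is done, the concluding modular arithmetic is routine.
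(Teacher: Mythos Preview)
Your strategy is exactly the paper's: invoke Proposition~\ref{prop2inflatable} to get $2$-symmetry of $\sigma$, split triples by how they meet the blocks $B_k$, compute the three contributions, equate to $(mn)^3/36$, and read off integrality constraints on $\bX^{\tau}(\sigma)$. The paper separates your ``type~2'' into two cases (two indices in the earlier block vs.\ two in the later block), but otherwise the decomposition and the logical flow are identical.

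There is, however, a genuine numerical mismatch you must resolve before the last paragraph can be claimed. With your values $c_\tau\in\{m(m-1)/4,\,m(m-1)/2\}$ and your formula $\bX^{\tau}(\sigma)=(m^3-m)/36 - c_\tau/4$, the normalized type-2 contribution is $\binom{m}{2}\cdot\{1/8,\,1/4\}$ (non-monotone vs.\ monotone). The paper's proof obtains $\binom{m}{2}\cdot\{1/4,\,1/2\}$, a factor of $2$ larger. This is not cosmetic: from the paper's numbers one gets $\bX^{132}(\sigma)-\bX^{123}(\sigma)=\binom{m}{2}/4$, hence $8\mid m(m-1)$, which together with $36\mid m(m-1)(m-5)$ yields precisely the residues $\{0,1,9,32\}\pmod{36}$. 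From \emph{your} numbers one gets $\bX^{132}(\sigma)-\bX^{123}(\sigma)=\binom{m}{2}/8$, hence $16\mid m(m-1)$, and $\bX^{132}(\sigma)=m(m-1)(4m-5)/144$; the resulting constraint lives modulo $144$, and its reduction mod $36$ is \emph{not} $\{0,1,9,32\}$ (for instance, $m=9$ makes $\bX^{132}(\sigma)$ non-integral). So your final sentence --- that the CRT ``picks out the stated four residue classes'' --- does not follow from your own formulas.

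In short: the approach is right and matches the paper, but the ``routine'' arithmetic you defer is exactly where the proof lives or dies. You need to redo the eight-subcase count carefully and reconcile the factor of $2$ with the paper's computation before asserting the conclusion.
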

\begin{proof} Triples of indices in $\binom{[m]}{3}$ come in four varieties: (a) all indices are in the same block $B_k$, (b) two indices come from $B_i$ and one index from $B_j$ with $i<j$, (c) two index comes from $B_i$ and two indices from $B_j$ with $i<j$, and (d) all three indices appear in different blocks.  The number of each pattern $\tau \in \mathfrak{S}_3$ that appear either in form (a) or (d) is, w.h.p.,
$$
m n^3 (1/36 + o(1)) + \bX^{\tau}(\sigma) n^3,
$$
a quantity which we will call $M$.  Counting the rest of the patterns, then, yields
\begin{align*}
\bX^{(123)}(\pi_n) & = M + \bX^{(12)}(\sigma) n^3 (1/2+o(1)) + \bX^{(12)}(\sigma) n^3 (1/2+o(1)) \\
& = M + m(m-1)n^3 (1/4+ o(1)) \\
\bX^{(132)}(\pi_n) & = M + \bX^{(12)}(\sigma) n^3 (1/2+o(1)) = M + m(m-1)n^3 (1/8 + o(1)) \\
\bX^{(213)}(\pi_n) & = M + \bX^{(12)}(\sigma) n^3 (1/2+o(1)) = M + m(m-1)n^3 (1/8 + o(1)) \\
\bX^{(231)}(\pi_n) & = M + \bX^{(21)}(\sigma) n^3 (1/2+o(1)) = M + m(m-1)n^3 (1/8 + o(1)) \\
\bX^{(312)}(\pi_n) & = M + \bX^{(21)}(\sigma) n^3 (1/2+o(1)) = M + m(m-1)n^3 (1/8 + o(1)) \\
\bX^{(321)}(\pi_n) & = M + \bX^{(21)}(\sigma) n^3 (1/2+o(1)) + \bX^{(21)}(\sigma) n^3 (1/4+o(1)) \\
& = M + m(m-1)n^3 (1/4 + o(1))
\end{align*}
where the last equality on each line comes from the fact that $\sigma$ is $2$-symmetric (owing to the previous Proposition and the fact that ${\sym}^\prime_3(\pi_n) \rightarrow {\sym}^\prime_2(\pi_n)$.)  Hence, we have
$$
n^{-3} \bX^{\tau}(\pi_n) = o(1) + m/36 + \bX^{\tau}(\sigma) + \binom{m}{2} \cdot \left \{ \begin{array}{ll} 1/2 & \textrm{if $\tau$ is monotone} \\ 1/4 & \textrm{otherwise} \end{array} \right . .
$$
Since $\pi_n$ is asymptotically $3$-symmetric, any two of these quantities are equal.  Letting $n \rightarrow \infty$, we have
$$
\bX^{(123)}(\sigma) + \frac{1}{2} \binom{m}{2} = \bX^{(132)}(\sigma) + \frac{1}{4} \binom{m}{2}.
$$
It is clear that this implies that $4 | \binom{m}{2}$, i.e., $8 | m(m-1)$, which is simply the condition that $m$ is $0$ or $1$ mod $8$.  Furthermore,
\begin{align*}
\binom{m}{3} & = \sum_\tau \bX^{\tau}(\sigma) \\
& = 2 \bX^{123}(\sigma) + 4 \left (\bX^{123}(\sigma) + \frac{m(m-1)}{8} \right ) \\
& = 6 \bX^{123}(\sigma) + \binom{m}{2}
\end{align*}
Since $\binom{m}{3} - \binom{m}{2} = m(m-1)(m-5)/6$ and $\bX^{123}(\sigma)$ is integral, we may conclude that $36 | m(m-1)(m-5)$.  This amounts to the condition that $m \equiv 0$, $1$, $5$, $9$, $19$, $23$, $27$, $28$, or $32 \pmod{36}$.  Since $m \equiv 0 \textrm{ or } 1 \pmod{8}$, the only possibilities are that $m \equiv 0, 1, 9, \textrm{ or } 32 \pmod{36}$.
\end{proof}

\begin{cor} If $\sigma \otimes \rho_n$ is asymptotically $3$-symmetric, then $\sigma$ is $2$-symmetric,
\begin{align*}
\bX^{(123)}(\sigma) & = \bX^{(321)}(\sigma) = \frac{m(m-1)(m-5)}{36} \qquad \textrm{and} \\
\bX^{(132)}(\sigma) & = \bX^{(213)}(\sigma) = \bX^{(231)}(\sigma) = \bX^{(312)}(\sigma) = \frac{m(2m-1)(m-1)}{72}.
\end{align*}
\end{cor}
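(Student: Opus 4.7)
The plan is to extract this corollary directly from the system of equations already displayed in the proof of Proposition \ref{prop3inflatable}. First I would note that $\sigma$ must be $2$-symmetric: asymptotic $3$-symmetry of $\pi_n = \sigma \otimes \rho_n$ implies asymptotic $2$-symmetry of $\pi_n$ (by the standard argument that every $3$-pattern extends a $2$-pattern, as already invoked parenthetically in the proof of Proposition \ref{prop3inflatable}), and then Proposition \ref{prop2inflatable} forces $\sigma$ to be $2$-symmetric.

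Next I would revisit the six identities in the proof of Proposition \ref{prop3inflatable} that express each $\bX^{\tau}(\pi_n)$ in the form $M + c_{\tau}\, m(m-1)\, n^3 + o(n^3)$, where $c_{\tau} = 1/4$ for the two monotone patterns $(123)$ and $(321)$, and $c_{\tau} = 1/8$ for the other four patterns. Asymptotic $3$-symmetry of $\pi_n$ means that $\bX^{\tau}(\pi_n)$ agrees across all $\tau \in \mathfrak{S}_3$ up to a $o(n^3)$ error; after dividing by $n^3$ the $M$-terms cancel, and since $\bX^{\tau}(\sigma)$ does not depend on $n$, letting $n \to \infty$ yields that $\bX^{(123)}(\sigma) = \bX^{(321)}(\sigma) =: A$, that $\bX^{(132)}(\sigma) = \bX^{(213)}(\sigma) = \bX^{(231)}(\sigma) = \bX^{(312)}(\sigma) =: B$, and that $B = A + \binom{m}{2}/4$.

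Finally, I would invoke the conservation identity $\sum_{\tau \in \mathfrak{S}_3} \bX^{\tau}(\sigma) = \binom{m}{3}$, which here becomes $2A + 4B = \binom{m}{3}$. Substituting $B = A + \binom{m}{2}/4$ gives $6A + \binom{m}{2} = \binom{m}{3}$, and since $\binom{m}{3} - \binom{m}{2} = m(m-1)(m-5)/6$, one obtains $A = m(m-1)(m-5)/36$ and, after a short calculation, $B = A + \binom{m}{2}/4 = m(m-1)(2m-1)/72$.

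There is really no serious obstacle here: the corollary amounts to an explicit solution of the linear system already solved implicitly when extracting the divisibility conditions in Proposition \ref{prop3inflatable}. The only mild care required is to confirm that the $o(1)$ error terms may be dropped in the limit $n \to \infty$, which is justified because the $\bX^{\tau}(\sigma)$ are constants independent of $n$, and that the high-probability qualifier in the proof of Proposition \ref{prop3inflatable} can be replaced by an almost-sure/limit statement for the deterministic quantities $\bX^{\tau}(\sigma)$ by averaging.
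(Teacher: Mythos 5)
Your proposal is correct and matches the paper's intended derivation: the corollary is stated without separate proof precisely because it falls out of the identities already established in the proof of Proposition \ref{prop3inflatable}, namely $2$-symmetry via Proposition \ref{prop2inflatable}, the relation $\bX^{(132)}(\sigma) = \bX^{(123)}(\sigma) + \tfrac{1}{4}\binom{m}{2}$, and the conservation identity $6\bX^{(123)}(\sigma) + \binom{m}{2} = \binom{m}{3}$. Your algebra checks out, so there is nothing to add.
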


Call a permutation so that $\sigma \otimes \rho_n$ is asymptotically $k$-symmetric ``$k$-inflatable.''  Note that, since asymptotic $(k+1)$-symmetry implies asymptotic $k$-symmetry, $(k+1)$-inflatability implies $k$-inflatability.  Furthermore, $2$-inflatability is just $2$-symmetry by Proposition \ref{prop2inflatable}, while $3$-inflatability requires the conditions given in the Corollary above.  The least $m$ so that the requisite quantities are integers is $m = 9$.  A computer search concluded that a $3$-inflatable permutation does indeed occur for the first time at $m=9$.  (In this case, $\frac{m(m-1)(m-5)}{36} = 8$ and $\frac{m(2m-1)(m-1)}{72} = 17$.)  In fact, there are exactly four permutations on $9$ symbols which are $3$-inflatable: $438951276$, $472951836$, $638159274$, and $672159834$.  Since each of these permutations has $\bX^{(1234)}(\sigma) = 0$, we may immediately conclude that $\sym^\prime_3(\sigma) \not \Rightarrow \sym^\prime_4(\sigma)$.

We have conducted searches for several larger values of $m$ where the modular restrictions hold, and in each case there were millions of $3$-inflatable permutations.  In fact, the resulting multitude of examples were found in a much smaller search space: those permutations which, like the examples given above, are the reverses and complements of their own inverses.  The inverse of a permutation is just its inverse as a function; the reverse of $\pi \in \mathfrak{S_n}$ is the permutation $\pi^\prime \in \mathfrak{S}_n$ with $\pi^\prime(j) = \pi^\prime(n+1-j)$; and the complement $\bar{\pi}$ of $\pi$ is defined by $\bar{\pi} = n+1 - \pi(j)$.  These operations are involutions on $\bigcup_n \mathfrak{S}_n$, and together they generate the group $\Gamma \cong D_8$ of maps that commute with projections onto subpatterns (i.e., the maps $\phi_I : \mathfrak{S}_n \rightarrow \mathfrak{S}_k$ for $I \subset [n]$ that take $\sigma$ to the order type of $\sigma|_I$).  Permutations $\pi$ such that $\pi^{-1} = \pi^\prime = \bar{\pi}$ have a particularly elegant representation as a pair of Young tableaux under the RSK correspondence: the underlying Ferrers diagram is symmetric about its diagonal, and the tableaux are each others' transpose.  When we pursued a search restricted to such permutations for $4$-inflatable examples at $n=64$, unfortunately, none were found, despite the use of thousands of dedicated processor-hours.  The authors have come to believe that actually are no $4$-inflatable permutations of this type.

\begin{conj} There is no $4$-inflatable permutations on $64$ symbols whose inverse is also its complement.
\end{conj}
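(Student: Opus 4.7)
The plan is to derive, for any permutation $\sigma \in \mathfrak{S}_{64}$ that is simultaneously $4$-inflatable and satisfies $\sigma^{-1} = \sigma' = \bar\sigma$, a fully explicit linear system on the $24$ pattern counts $\bX^{\tau}(\sigma)$, $\tau \in \mathfrak{S}_4$, and then to show that no nonnegative integer solution exists.  Success along these lines would rule out the existence of such a $\sigma$ without any explicit enumeration.

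The first step extends the block-partition analysis of Proposition~\ref{prop3inflatable} from $k=3$ to $k=4$.  A $4$-subset of $[mn]$ falls into exactly one of five types according to how its elements distribute among the blocks $B_1,\ldots,B_m$: type $4$, $3{+}1$, $2{+}2$, $2{+}1{+}1$, or $1{+}1{+}1{+}1$.  For each $\tau \in \mathfrak{S}_4$ I would compute, w.h.p.\ and up to $o(n^4)$, the number of $4$-subsets of each type inducing $\tau$ in $\pi_n = \sigma \otimes \rho_n$, expressed as a linear combination of the $\bX^{\tau'}(\sigma)$ for $\tau'$ of size $\le 4$.  The genuinely new ingredients beyond the $k=3$ proof are the $2{+}2$ and $2{+}1{+}1$ contributions, which weight pairs and triples of block-indices in $\sigma$ by the random $2$- and $3$-patterns that arise within blocks.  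Setting the $24$ normalised quantities $n^{-4}\bX^{\tau}(\pi_n)$ equal, letting $n \to \infty$, and substituting the values of $\bX^{\tau'}(\sigma)$ for $\tau' \in \mathfrak{S}_2 \cup \mathfrak{S}_3$ supplied by the corollary to Proposition~\ref{prop3inflatable}, yields an explicit rational linear system in the $24$ unknowns.

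The second step imports the symmetry hypothesis.  The conditions $\sigma^{-1} = \sigma' = \bar\sigma$ force $\sigma$ to be fixed by a subgroup $H$ of the group $\Gamma \cong D_8$ generated by inverse, reverse, and complement (the subgroup $H$ contains at least the identity together with the products of distinct pairs among these three involutions).  For each $h \in H$ one has $\bX^{\tau}(\sigma) = \bX^{h\tau}(\sigma)$, and the resulting identifications collapse the $24$ unknowns to a handful of $H$-orbit representatives in $\mathfrak{S}_4$.  Combined with the system from step one, the total-count identity $\sum_\tau \bX^\tau(\sigma) = \binom{64}{4}$, and the integrality requirement $\bX^\tau(\sigma) \in \Z_{\geq 0}$, this produces a completely explicit and small arithmetic feasibility problem in which $m = 64$ is substituted throughout.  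The hope is that this forces at least one orbit value to be non-integer or negative.

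The principal obstacle, and presumably why the authors stop at a conjecture, is that $m = 64$ was chosen precisely to clear the obvious divisibility hurdles on $k$-pattern counts up to $k=4$, so the orbit values coming out of the system are likely to be legitimate nonnegative rationals and quite possibly nonnegative integers.  If that is the case, the system must be augmented with genuinely new constraints.  The natural reservoir is the RSK characterisation of such $\sigma$ as arising from a pair $(P, P^t)$ on a self-conjugate Young diagram with $64$ cells: the (relatively few) such shapes can be enumerated, each puts hook-length and longest-increasing/decreasing-subsequence constraints on the realisable pattern counts of $\sigma$, and one would hope to show that none of these is compatible with the forced orbit profile.  If even this does not close the gap, the conjecture will demand a genuinely new ingredient beyond linear algebra and RSK bookkeeping, and I would not expect a short, elementary proof to exist.
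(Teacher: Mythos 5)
This statement is a conjecture in the paper: the authors offer no proof, only the negative outcome of a large (and not claimed to be exhaustive) computer search over permutations with $\pi^{-1} = \pi^\prime = \bar{\pi}$ at $m=64$. So there is no proof to compare yours against; the question is whether your plan could succeed where the authors stopped. It cannot, and the paper itself shows why. Steps one and two of your proposal--derive the linear system on $4$-pattern counts forced by $4$-inflatability, collapse by the $\Gamma$-symmetry, and check integrality and nonnegativity at $m=64$--reproduce exactly Proposition \ref{prop4inflatable} and the computation immediately following it. The paper evaluates the seven orbit values at $m=64$ as $24052$, $25606$, $27321$, $25543$, $27419$, $25508$, $29127$: all nonnegative integers whose orbit-weighted sum is exactly $\binom{64}{4} = 635376$. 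The additional identifications coming from $\sigma^{-1} = \sigma^\prime = \bar{\sigma}$ buy you nothing, since the forced counts are already constant on $\Gamma$-orbits, and the total-count identity is automatically satisfied. Your arithmetic feasibility problem therefore has a solution, and no contradiction can emerge from it.

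Your final paragraph correctly anticipates this, but the proposed rescue via RSK is a hope rather than an argument. The self-conjugate shape of the common Ferrers diagram constrains only coarse statistics (longest increasing and decreasing subsequences, descent structure via the recording tableau), and these do not determine, or even meaningfully restrict, the twenty-four individual pattern counts; for instance, the forced value $\bX^{(1234)}(\sigma) = 24052 > 0$ merely requires an increasing subsequence of length $4$ in a permutation of length $64$, which excludes nothing. To prove the conjecture you would need either a genuinely new obstruction (something beyond linear relations among pattern counts and beyond shape statistics) or a completed exhaustive search of the roughly self-dual permutations on $64$ symbols; your proposal supplies neither, so the gap between "the linear system is consistent" and "no such permutation exists" remains entirely open.
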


\noindent The next result describes the conditions on pattern-counts that are necessary and sufficient for $4$-inflatability.

\begin{prop} \label{prop4inflatable} A permutation $\sigma \in \mathfrak{S}_m$ is $4$-inflatable iff
\begin{enumerate}
\item $\bX^{(1234)}(\sigma) = \bX^{(4321)}(\sigma) = \frac{m(m-1)(m^2-11m+44)}{576}$
\item $\bX^{(1243)}(\sigma) = \bX^{(2134)}(\sigma) = \bX^{(3421)}(\sigma) = \bX^{(4312)}(\sigma) = \frac{m(m-1)(m-2)(m-5)}{576}$
\item $\bX^{(1342)}(\sigma) = \bX^{(1423)}(\sigma) = \bX^{(4132)}(\sigma) = \bX^{(2431)}(\sigma) = \bX^{(3241)}(\sigma) = \\
       \bX^{(2314)}(\sigma) = \bX^{(4213)}(\sigma) = \bX^{(3124)}(\sigma) = \frac{m(m-1)(m^2-3m-1)}{576}$
\item $\bX^{(1432)}(\sigma) = \bX^{(4123)}(\sigma) = \bX^{(2341)}(\sigma) = \bX^{(3214)}(\sigma) = \frac{m(m-1)(m^2-7m+1)}{576}$
\item $\bX^{(1324)}(\sigma) = \bX^{(4231)}(\sigma) = \frac{m(m-1)(m^2-3m+13)}{576}$
\item $\bX^{(2143)}(\sigma) = \bX^{(3412)}(\sigma) = \frac{m(m-1)(m^2-7m-4)}{576}$
\item $\bX^{(2413)}(\sigma) = \bX^{(3142)}(\sigma) = \frac{m(m-1)(m^2+m+1)}{576}$
\end{enumerate}
\end{prop}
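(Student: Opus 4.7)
The plan is to proceed in close analogy with the proof of Proposition \ref{prop3inflatable}, ascending from triples to quadruples. I would partition the 4-subsets of $[mn]$ by their \emph{block profile}, namely the multiset of sizes of intersections with the blocks $B_k = [(k-1)n + 1, kn]$. The five possible profiles are $(4)$, $(3,1)$, $(2,2)$, $(2,1,1)$, and $(1,1,1,1)$, and for each I would compute the contribution to $\bX^\tau(\pi_n)$ with high probability, using standard concentration for $\rho_n$.

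The two extremes are quick. Profile $(4)$ contributes $mn^4/576 + o(n^4)$ uniformly in $\tau$, because $\rho_n$ restricted to any single block has asymptotically uniform $4$-pattern distribution. Profile $(1,1,1,1)$ contributes exactly $\bX^\tau(\sigma)\,n^4$, since the pattern of $\pi_n$ on any transversal of four distinct blocks coincides with $\sigma$'s pattern on the chosen block indices. The three intermediate profiles produce $\tau$-dependent terms involving shorter pattern counts of $\sigma$: profile $(3,1)$ involves $\bX^{(12)}(\sigma)$ and $\bX^{(21)}(\sigma)$ (the singleton block contributes the global maximum or minimum among the four images, so $\tau$ is constrained to have one of $\tau(1),\tau(4)$ lying in $\{1,4\}$, with the other three positions realizing a uniform $\mathfrak{S}_3$-pattern); profile $(2,2)$ again uses 2-pattern counts and confines $\tau$ to the 8-element set where $\tau(\{1,2\}) \in \{\{1,2\},\{3,4\}\}$; profile $(2,1,1)$ invokes the 3-pattern counts of $\sigma$ (the three participating blocks determine a random $\tau' \in \mathfrak{S}_3$, and the coin flip inside the doubled block refines each such $\tau'$ to one of two specific 4-patterns).

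Since 4-inflatability implies 3-inflatability, I may invoke the Corollary following Proposition \ref{prop3inflatable} to substitute the explicit polynomial values of $\bX^{\tau'}(\sigma)$ for all $\tau' \in \mathfrak{S}_3$, and 2-symmetry to do the same for $\mathfrak{S}_2$. The expansion then takes the form $\bX^\tau(\pi_n)/n^4 = \bX^\tau(\sigma) + E_\tau(m) + o(1)$, where $E_\tau(m)$ is an explicit polynomial in $m$. Because every ingredient of the computation (block profiles, 2- and 3-pattern counts of $\sigma$, etc.) is equivariant under the $\Gamma \cong D_8$-action generated by inverse, reverse, and complement, $E_\tau(m)$ is constant on each of the seven $\Gamma$-orbits of $\mathfrak{S}_4$. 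Asymptotic 4-symmetry then reads $\bX^\tau(\sigma) = C - E_\tau(m)$ for a single constant $C$, pinned down by the normalization $\sum_\tau \bX^\tau(\sigma) = \binom{m}{4}$. Solving this system orbit-by-orbit yields the seven polynomial identities (1)--(7); substituting these values back into the expansion runs the argument in reverse and gives sufficiency.

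The main obstacle is the bookkeeping for profile $(2,1,1)$: for each 3-pattern $\tau' \in \mathfrak{S}_3$, each of the three positions at which the doubled block may sit among the triple, and each of the two coin-flip outcomes, one must identify the resulting 4-pattern $\tau$ and credit the corresponding weighted count to $\bX^\tau$. Using the $\Gamma$-action to reduce to one representative of each of the seven $\mathfrak{S}_4$-orbits should collapse the $36$ cases considerably; otherwise one is verifying seven polynomial identities in $m$ directly, which is routine but tedious.
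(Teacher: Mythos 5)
Your proposal is correct and follows essentially the same route as the paper: the block-profile decomposition you describe is exactly what Lemma \ref{lemmapartitionformula} formalizes (the profiles correspond to the interval partitions $\Pi^\tau_t$ consistent with $\tau$, and your ``coin flip''/singleton constraints are the consistency conditions), after which one substitutes the known $Y_{(1)}$, $Y_{(12)}$, and $Y_{(123)}$, $Y_{(132)}$ values and solves for $\bX^\tau(\sigma)$ orbit by orbit. The only cosmetic difference is that the paper reads the common constant $m^4/576$ directly off $\binom{mn}{4}/4!$ rather than via the normalization $\sum_\tau \bX^\tau(\sigma) = \binom{m}{4}$, and it organizes your $(2,1,1)$ bookkeeping into an explicit table of consistent partitions and contractions.
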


Note that the pattern count $\bX^\tau$ depends only on the orbit under $\Gamma$ to which $\tau$ belongs, so we need only check seven conditions out of the possible 24, i.e., one from each of the lists above.  The first $m \geq 4$ for which these conditions yield integers is $m=64$, when they are, respectively, $24052$, $25606$, $27321$, $25543$, $27419$, $25508$, and $29127$.  Note that
$$
\ang{24052, 25606, 27321, 25543, 27419, 25508, 29127} \cdot \ang{2,4,8,4,2,2,2} = 635376,
$$
which is just $\binom{64}{4}$.

Proposition \ref{prop4inflatable} will be a consequence of the following Lemma, which generalizes some of the above arguments.  First, we introduce some notation. For $\tau \in \mathfrak{S}_k$, let $\Pi^{\tau}_t$ denote the partitions of $[k]$ into $t$ intervals which are {\it consistent with} $\tau$.  That is, each element of $\Pi^{\tau}_t$ has the form $\{J_1 < \cdots < J_t\}$, where each $J_j$ is an interval so that $\tau(J_j)$ is an interval.  (We write $S < T$ for $S,T \subset \Z$ whenever every element of $S$ is less than every element of $T$.)  Let $I_j = [(n-1)j+1,\ldots,nj]$ for $j = 1, \ldots, m$.  The sets $(\sigma \otimes \rho_n)(I_j)$ are disjoint intervals.  Therefore, whenever $\tau$ occurs in $\sigma \otimes \rho_n$ on $\{i_1 < \cdots < i_k\} \subset [mn]$, partitioning the $i_j$ according to the $I_s$ to which they belong yields an element of $\Pi^\tau_t$ for some $t$.  More specifically, if we define $f : [k] \rightarrow [m]$ by setting $f(j) = s$ whenever $i_j \in I_s$, then the collection of sets $\pi = \{f^{-1}(s) : s \in [m]\}$ is a partition of $[k]$ into $t$ intervals consistent with $\tau$.  Since $\tau$ well-orders the elements of $\pi$, we write $\tau / \pi \in \mathfrak{S}_t$ for the ``contracted'' permutation induced by $\tau$ on the indices of the $J_j \in \pi$.  Formally, if we choose $r_1,\ldots,r_t \in [k]$ so that $f(r_j) < f(r_{j+1})$ for $1 \leq j \leq t-1$, then $(\tau / \pi)(a) < (\tau / \pi)(b)$ iff $\tau(r_a) < \tau(r_b)$.  It is not hard to see that the contraction operation is well-defined.

\begin{lemma} \label{lemmapartitionformula} For $\tau \in \mathfrak{S}_k$,
$$
\frac{m^k}{k!^2} = \sum_{t=1}^k \sum_{\pi \in \Pi^\tau_t} \bX^{\tau / \pi}(\sigma) \prod_{J \in \pi} |J|!^{-2}.
$$
\end{lemma}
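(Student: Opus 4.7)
The plan is to compute $E[\bX^\tau(\sigma \otimes \rho_n)]$ asymptotically in two ways and extract the $n^k$ coefficient.  I take the implicit hypothesis to be that $\sigma$ is $k$-inflatable, so $\sigma \otimes \rho_n$ is asymptotically $k$-symmetric.

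First I will classify each $k$-subset $S = \{a_1 < \cdots < a_k\} \subseteq [mn]$ by the partition $\pi_S$ of $[k]$ defined by declaring $i \sim i'$ iff $a_i$ and $a_{i'}$ lie in the same domain block $I_q = [(q-1)n+1,\,qn]$.  Each class of $\pi_S$ is automatically an interval of $[k]$, and since $\sigma \otimes \rho_n$ sends each $I_q$ bijectively to the codomain interval $I_{\sigma(q)}$, the event that $(\sigma \otimes \rho_n)|_S$ realizes pattern $\tau$ forces $\tau$ to map each class of $\pi_S$ to an interval of $[k]$; that is, $\pi_S \in \Pi^\tau_t$ for some $t$.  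For a fixed such $\pi$ with blocks $J_1 < \cdots < J_t$, the pattern on $S$ equals $\tau$ precisely when (i) the $t$ distinct domain block indices $q_1 < \cdots < q_t$ hit by $S$ have $\sigma$-pattern $\tau/\pi$, and (ii) for each $r$, the restriction of $\rho_n$ to the re-indexing of $S \cap I_{q_r}$ inside $[n]$ realizes the standardization of $\tau|_{J_r}$.  This holds because inter-block comparisons in $\sigma \otimes \rho_n$ are governed entirely by $\sigma$, while intra-block comparisons are governed entirely by $\rho_n$.

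Taking expectations over uniformly random $\rho_n$: condition (i) contributes the factor $\bX^{\tau/\pi}(\sigma)$; within each block $J$ there are $\binom{n}{|J|}$ choices of position-set, and by exchangeability $\rho_n$ matches a fixed pattern on a given subset with probability $1/|J|!$.  The events in (ii) across different blocks are independent when the re-indexed subsets of $[n]$ are pairwise disjoint, and the non-disjoint configurations number only $O(n^{k-1})$ and so contribute $o(n^k)$ to the total.  Summing gives
$$E[\bX^\tau(\sigma \otimes \rho_n)] = (1+o(1))\, n^k \sum_{t=1}^k \sum_{\pi \in \Pi^\tau_t} \bX^{\tau/\pi}(\sigma) \prod_{J \in \pi} |J|!^{-2}.$$
On the other hand, $k$-inflatability of $\sigma$ yields $\bX^\tau(\sigma \otimes \rho_n) = (1+o(1))(mn)^k/k!^2$ almost surely, and the same holds in expectation by boundedness.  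Equating and dividing by $n^k$, then letting $n \to \infty$, produces the identity.

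The main obstacle I anticipate is the careful bookkeeping in the ``iff'' characterization (i)--(ii): each $k$-subset contributing to $\bX^\tau(\sigma \otimes \rho_n)$ must be counted by exactly one pair $(\pi,(q_r))$, which requires cleanly separating the block-level permutation data from the within-block data.  The independence/overlap estimate in (ii) is routine but needs to be made explicit to justify the extraction of the $n^k$ coefficient on both sides.
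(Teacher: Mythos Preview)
Your argument is correct and follows essentially the same route as the paper: assume $\sigma$ is $k$-inflatable, decompose $\bX^\tau(\sigma\otimes\rho_n)$ according to the block-partition $\pi\in\Pi^\tau_t$ induced on each $k$-subset, recognize the contribution from each $\pi$ as $\bX^{\tau/\pi}(\sigma)\prod_J \binom{n}{|J|}/|J|!$, and extract the $n^k$ coefficient in the limit. The only cosmetic difference is that you pass through expectations and explicitly justify the product formula via independence of pattern-types on disjoint re-indexed subsets (handling overlaps as an $O(n^{k-1})$ error), whereas the paper phrases the same computation directly as a limit in probability.
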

\begin{proof}  Suppose that $\sigma$ is $k$-inflatable.  Since $\bX^{\tau}(\sigma \otimes \rho_n) = \binom{nm}{k}(1/k!+o(1))$, we have
\begin{align*}
\frac{m^k}{k!^2} &= \lim_{n \rightarrow \infty} \frac{\bX^{\tau}(\sigma \otimes \rho_n)}{n^k} \\
&= \lim_{n \rightarrow \infty} n^{-k} \sum_{t=1}^k \sum_{\pi \in \Pi^\tau_t} \bX^{\tau / \pi}(\sigma) \prod_{J \in \pi} \frac{1}{|J|!} \binom{n}{|J|} \\
&= \lim_{n \rightarrow \infty} n^{-k} \sum_{t=1}^k \sum_{\pi \in \Pi^\tau_t} \bX^{\tau / \pi}(\sigma) \prod_{J \in \pi} \frac{n^{|J|}(1+o(1))}{|J|!^2} \\
&= \lim_{n \rightarrow \infty} n^{-k} \sum_{t=1}^k \sum_{\pi \in \Pi^\tau_t} \bX^{\tau / \pi}(\sigma) n^{\sum_{J \in \pi}{|J|}} \prod_{J \in \pi} |J|!^{-2} \\
&= \sum_{t=1}^k \sum_{\pi \in \Pi^\tau_t} \bX^{\tau / \pi}(\sigma) \prod_{J \in \pi} |J|!^{-2},
\end{align*}
where limits are defined by convergence in probability.
\end{proof}

We may now prove Proposition \ref{prop4inflatable}.  Define the polynomial $Y_{\tau}(m)$ to be the number of times that $\tau \in \mathfrak{S}_k$ must occur in a permutation on $m$ symbols if $\tau \otimes \rho_n$ is asymptotically $k$-symmetric.  (The above Lemma ensures that this is indeed a polynomial.)  We have, by previous computations,
\begin{align*}
Y_{(1)}(m) &= m \\
Y_{(12)}(m) = Y_{(21)}(m) &= \frac{m(m-1)}{4} \\
Y_{(123)}(m) = Y_{(321)}(m) &= \frac{m(m-1)(m-5)}{36} \\
Y_{(132)}(m) = Y_{(213)}(m) = Y_{(231)}(m) = Y_{(312)}(m) &= \frac{m(2m-1)(m-1)}{72}.
\end{align*}

\begin{proof}
We employ the follow table in our calculations.  Note that each line containing elements $\pi$ of $\Pi^{\tau}_t$ is followed by the corresponding list of permutations $\tau / \pi$.
$$
\begin{array}{c||l|l|l|l}
\tau & \Pi^{\tau}_1 & \Pi^{\tau}_2 & \Pi^{\tau}_3 & \Pi^{\tau}_4 \\
\hline
(1234) & 1234 & 1|234,12|34,123|4 & 1|2|34,1|23|4,12|3|4 & 1|2|3|4 \\
       & (1)  & (12),(12),(12)    & (123),(123),(123)    & (1234) \\
\hline
(1243) & 1234 & 1|234,12|34       & 1|2|34,12|3|4        & 1|2|3|4 \\
       & (1)  & (12),(12)         & (123),(132)          & (1243)\\
\hline
(1342) & 1234 & 1|234             & 1|23|4               & 1|2|3|4 \\
       & (1)  & (12)              & (132)                & (1342) \\
\hline
(1432) & 1234 & 1|234             & 1|2|34,1|23|4        & 1|2|3|4 \\
       & (1)  & (12)              & (132),(132)          & (1432) \\
\hline
(1324) & 1234 & 1|234,123|4       & 1|23|4               & 1|2|3|4 \\
       & (1)  & (12),(12)         & (123)                & (1324) \\
\hline
(2143) & 1234 & 12|34             & 1|2|34,12|3|4        & 1|2|3|4 \\
       & (1)  & (12)              & (213),(132)          & (2143) \\
\hline
(2413) & 1234 &                   &                      & 1|2|3|4 \\
       & (1)  &                   &                      & (2413) \\
\end{array}
$$
Then
\begin{align*}
Y_{(1234)} &= \frac{m^4}{576} - \sum_{t=1}^3 \sum_{\pi \in \Pi^{(1234)}_t} Y_{(1234) / \pi} \prod_{J \in \pi} |J|!^{-2} \\
&= \frac{m^4}{576} - \frac{Y_{(1)}}{4!^2} - \frac{2 Y_{(12)}}{1!^2 \cdot 3!^2} - \frac{Y_{(12)}}{2!^4} - \frac{3 Y_{(123)}}{1!^4 2!^2} \\
&= \frac{m^4}{576} - \frac{m}{576} - \frac{m(m-1)}{72} - \frac{m(m-1)}{64} - \frac{m(m-1)(m-5)}{48} \\
&= \frac{m^4 - 12m^3 + 55m^2 - 44m}{576} = \frac{m(m-1)(m^2-11m+44)}{576}.
\end{align*}
\begin{align*}
Y_{(1243)} &= \frac{m^4}{576} - \frac{Y_{(1)}}{4!^2} - \frac{Y_{(12)}}{1!^2 \cdot 3!^2} - \frac{Y_{(12)}}{2!^4} - \frac{Y_{(123)}}{1!^4 2!^2} - \frac{Y_{(132)}}{1!^4 2!^2} \\
&= \frac{m^4}{576} - \frac{m}{576} - \frac{m(m-1)}{144} - \frac{m(m-1)}{64} \\
& \qquad - \frac{m(m-1)(m-5)}{144}  - \frac{m(2m-1)(m-1)}{288} \\
&= \frac{m^4 - 8m^3 + 17m^2 - 10m}{576} = \frac{m(m-1)(m-2)(m-5)}{576}.
\end{align*}
\begin{align*}
Y_{(1342)} &= \frac{m^4}{576} - \frac{Y_{(1)}}{4!^2} - \frac{Y_{(12)}}{1!^2 \cdot 3!^2} - \frac{Y_{(132)}}{1!^4 2!^2} \\
&= \frac{m^4}{576} - \frac{m}{576} - \frac{m(m-1)}{144} - \frac{m(2m-1)(m-1)}{288} \\
&= \frac{m^4 - 4m^3 + 2m^2 + m}{576} = \frac{m(m-1)(m^2-3m-1)}{576}.
\end{align*}
\begin{align*}
Y_{(1432)} &= \frac{m^4}{576} - \frac{Y_{(1)}}{4!^2} - \frac{Y_{(12)}}{1!^2 \cdot 3!^2} - \frac{2 Y_{(132)}}{1!^4 2!^2} \\
&= \frac{m^4}{576} - \frac{m}{576} - \frac{m(m-1)}{144} - \frac{m(2m-1)(m-1)}{144} \\
&= \frac{m^4 - 8m^3 + 8m^2 - m}{576} = \frac{m(m-1)(m^2-7m+1)}{576}.
\end{align*}
\begin{align*}
Y_{(1324)} &= \frac{m^4}{576} - \frac{Y_{(1)}}{4!^2} - \frac{2 Y_{(12)}}{1!^2 \cdot 3!^2} - \frac{Y_{(132)}}{1!^4 2!^2} \\
&= \frac{m^4}{576} - \frac{m}{576} - \frac{m(m-1)}{72} - \frac{m(m-1)(m-5)}{144} \\
&= \frac{m^4 - 4m^3 + 16m^2 - 13m}{576} = \frac{m(m-1)(m^2-3m+13)}{576}.
\end{align*}
\begin{align*}
Y_{(2143)} &= \frac{m^4}{576} - \frac{Y_{(1)}}{4!^2} - \frac{Y_{(12)}}{2!^4} - \frac{Y_{(213)} + Y_{(132)}}{1!^4 2!^2} \\
&= \frac{m^4}{576} - \frac{m}{576} - \frac{m(m-1)}{64} - \frac{m(2m-1)(m-1)}{144} \\
&= \frac{m^4 - 8m^3 + 3m^2 +4m}{576} = \frac{m(m-1)(m^2-7m-4)}{576}.
\end{align*}
\begin{align*}
Y_{(2413)} &= \frac{m^4}{576} - \frac{Y_{(1)}}{4!^2} \\
&= \frac{m^4}{576} - \frac{m}{576} \\
&= \frac{m^4 - m}{576} = \frac{m(m-1)(m^2+m+1)}{576}.
\end{align*}
\end{proof}

\section{Tables}

\noindent Values of the function $a_p(k)$:
\begin{center}
\begin{tabular}[c]{clllllllllllllllllll}
$a_p(k)$ &2&3&4&5&6&7&8&9&10&11&12&13&14&15&16&17 \\
\hline
2  &	2&2&5&5&6&6&10&10&11&11&13&13&14&14&19&19 \\
3  &	0&2&2&2&3&3&3&6&6&6&7&7&7&8&8&8 \\
5  &	0&0&0&2&2&2&2&2&3&3&3&3&3&4&4&4 \\
7  &    0&0&0&0&0&2&2&2&2&2&2&2&3&3&3&3 \\
11 &	0&0&0&0&0&0&0&0&0&2&2&2&2&2&2&2 \\
13 &	0&0&0&0&0&0&0&0&0&0&0&2&2&2&2&2 \\
17 &	0&0&0&0&0&0&0&0&0&0&0&0&0&0&0&2
\end{tabular}
\end{center}
\vspace{.25in}

\noindent Values of the minimum modulus $T(k)$.
\begin{center}
\begin{tabular}{c|c|c|c|c}
1&2&3&4&5 \\
\hline
1& 4& 36& 288& 7200\\
\hline
\hline
6&7&8&9&10 \\
\hline
43200 & 2116800 & 33868800 & 914457600 & 9144576000
\end{tabular}
\end{center}

\noindent Values of the function $f(k)$.
\begin{center}
\begin{tabular}{c|c|c|c|c|c|c|c}
1&2&3&4&5&6&7&8 \\
\hline
1& 4& 9& 64& 128 & 352 & 1377 & 180225
\end{tabular}
\end{center}

\bibliographystyle{plain}
\bibliography{andrew}

\end{document}